 \newtheorem{Theorem}{Theorem}[section]
 \newtheorem{Corollary}[Theorem]{Corollary}
 \newtheorem{Lemma}[Theorem]{Lemma}
 \newtheorem{Proposition}[Theorem]{Proposition}
 \newtheorem{Remark}[Theorem]{Remark}
 \numberwithin{equation}{section}
\begin{document}

\title[the weighted $L^2$ integration]
{a support function of the weighted $L^2$ integrations on the superlevel sets of the weights}

\date{\today}
\author[Qi'an Guan]{Qi'an Guan$^{*}$}
\address{Qi'an Guan: School of Mathematical Sciences, and Beijing International Center for
Mathematical Research, Peking University, Beijing, 100871, China.}
\email{guanqian@math.pku.edu.cn}

\author[Zhenqian Li]{Zhenqian Li}
\address{Zhenqian Li:  School of Mathematical Sciences, Peking University, Beijing, 100871,
China.}
\email{lizhenqian@amss.ac.cn}

\author[Jiafu Ning]{Jiafu Ning$^{*}$}

\address{Jiafu Ning: College of Mathematics and Statistics, Chongqing University, Chongqing 401331,
 China.}
\email{jfning@cqu.edu.cn}

\thanks{The first author was partially supported by NSFC-11522101 and NSFC-11431013, the third author was partially supported by NSFC-11501058 and
NSFC-11501059.}

\subjclass[2010]{32D15, 32E10, 32L10, 32U05, 32W05}

\keywords{multiplier ideal sheaf, strong openness conjecture, weighted $L^2$ integration, support function}

\thanks{$*$ The first author and the third author are both the corresponding authors.}


\begin{abstract}
In this article,
we establish a support function of the weighted $L^2$ integrations
on the superlevel sets of the weights with optimal asymptoticity near the positive infinity,
which is an analogue of the truth of Demailly's strong openness conjecture.
\end{abstract}

\maketitle

\section{Introduction}

Let $X$ be a domain in $\mathbb{C}^n$, and
let $\varphi$ be a negative plurisubharmonic function on $X$.
The multiplier ideal sheaf $\mathscr{I}(\varphi)$ can be defined
as the sheaf of germs of holomorphic functions $f$ such that
$|f|^{2}e^{-\varphi}$ is locally integrable (see \cite{D-K01}, \cite{Nadel90}).
It's well-known that $\mathscr{I}(\varphi)$ is a coherent analytic sheaf (see \cite{demailly2010}).
Let $\mathscr{I}_{+}(\varphi):=\cup_{\varepsilon>0}\mathscr{I}((1+\varepsilon)\varphi).$
In \cite{GZopen}, Guan and Zhou proved Demailly's strong openness conjecture, i.e., $\mathscr{I}_{+}(\varphi)=\mathscr{I}(\varphi).$

Let $z_{0}$ be a point in a pseudoconvex domain $D\subset X$,
and let $F$ be a holomorphic function near $z_{0}$.
Let $D_t=\{z\in D\big|\varphi\geq-t\}$ the superlevel set of weight $\varphi$,
and let $C_{F,\varphi,t}(z_{0})$ be the infimum of $\int_{D_t}|F_{1}|^{2}d\lambda_{n}$ for all $F_{1}\in\mathcal{O}(D)$ satisfying condition $(F_{1}-F,z_{0})\in\mathscr{I}(\varphi)_{z_{0}}$,
where $d\lambda_{n}$ is
the  Lebesgue measure on $\mathbb{C}^n$.

When $C_{F,\varphi,t}(z_{0})=0$ or $+\infty$, set $\frac{\int_{D_t}|F|^{2}e^{-\varphi}d\lambda_n}{C_{F,\varphi,t}(z_{0})}=+\infty$.
Then the truth of the strong openness conjecture
is equivalent to the statement that
for any pseudoconvex domain $D\subset X$, any point $z_{0}\in D$ and any holomorphic $F$ on $D$,
if
$\lim_{t\to+\infty}\frac{\int_{D_t}|F|^{2}e^{-(1+\varepsilon)\varphi}d\lambda_n}{C_{F,(1+\varepsilon)\varphi,t}(z_{0})}$
$=+\infty$
holds for any $\varepsilon>0,$
then
\begin{equation}
\label{equ:0729c}
\lim_{t\to+\infty}(\lim_{\varepsilon\to0^+}\frac{\int_{D_t}|F|^{2}e^{-(1+\varepsilon)\varphi}d\lambda_n}{C_{F,(1+\varepsilon)\varphi,t}(z_{0})})=+\infty
\end{equation}
(for details see Section \ref{sec:reformulation}).

In the present article,
we establish the following support function of
$\frac{\int_{D_t}|F|^{2}e^{-\varphi}d\lambda_n}{C_{F,\varphi,t}(z_{0})}$ (independent of $D$, $\varphi$ and $F$) with optimal asymptoticity.
\begin{Theorem}\label{thm 1}
For any pseudoconvex domain $D$ in $\mathbb{C}^{n}$, and any holomorphic function $F$ and negative plurisubharmonic function $\varphi$ on $D$,
\begin{equation}
\label{equ:20170430}
\frac{\int_{D_t}|F|^{2}e^{-\varphi}d\lambda_n}{C_{F,\varphi,t}(z_{0})}\geq h^{-1}(t-3)
\end{equation}
holds
for any $t\in(0,+\infty)$, where $h(x)=x+\log x$, and $h^{-1}(t-3)$ is the support function.
\end{Theorem}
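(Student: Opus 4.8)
The plan is to prove the bound $\int_{D_t}|F|^2 e^{-\varphi}\,d\lambda_n \geq h^{-1}(t-3)\cdot C_{F,\varphi,t}(z_0)$ by an $L^2$-extension / optimization argument comparing the weighted integral against the model where the weight concentrates all its ``mass'' near the point $z_0$. First I would set up the normalization: we may assume $\int_{D_t}|F|^2 e^{-\varphi}\,d\lambda_n < +\infty$ (otherwise there is nothing to prove), and it suffices to handle the case $t$ large. The key observation is that $D_t \supset D_s$ for $s \leq t$, and that on the sublevel structure $\{\varphi < -s\}$ the weight $e^{-\varphi}$ is at least $e^s$. This lets one split $\int_{D_t}|F|^2 e^{-\varphi} \geq e^s \int_{\{-t \leq \varphi < -s\}}|F|^2 + \int_{\{\varphi \geq -s\}}|F|^2 e^{-\varphi}$, so the growth of the weighted integral is governed by how much $L^2$ mass of $F$ sits in the region where $\varphi$ is very negative.

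Next, the main step is to relate $C_{F,\varphi,t}(z_0)$ to a one-variable monotonicity in $t$. The quantity $C_{F,\varphi,t}(z_0)$ is the minimal $\int_{D_t}|F_1|^2$ over holomorphic $F_1$ on $D$ with $(F_1 - F, z_0)\in \mathscr{I}(\varphi)_{z_0}$; since $D_{t}$ shrinks as $t$ decreases, $C_{F,\varphi,t}(z_0)$ is nonincreasing in $t$, and I expect that one can show it is, up to the $h$-correction, controlled by the full weighted integral via an Ohsawa--Takegoshi-type extension: given a near-minimizer $F_1$ realizing $C_{F,\varphi,t}(z_0)$ approximately, one modifies $F_1$ on the superlevel set using a solution of a $\bar\partial$-equation with the weight $\varphi$ truncated at level $-t$, producing a competitor whose $L^2$ norm over $D_t$ is bounded by a constant times $\int_{D_t}|F|^2 e^{-\varphi}$. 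The precise bookkeeping of constants is what produces the function $h(x)=x+\log x$ and the shift by $3$: one optimizes over the truncation parameter, and the $\log$ term comes from the logarithmic loss in the weighted estimate (the ``$+\log x$'' reflecting the entropy/volume factor of the cutoff region).

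I would then carry out the optimization explicitly. Writing $G(t):=\int_{D_t}|F|^2 e^{-\varphi}\,d\lambda_n$ and $c(t):=C_{F,\varphi,t}(z_0)$, the differential inequality relating $G$, $c$ and the truncated-weight extension constant should take the schematic form $G(t) \geq c(t) \cdot \psi(t)$ where $\psi$ satisfies a functional inequality forcing $\psi(t)\geq h^{-1}(t-3)$; equivalently $h(\psi(t)) \geq t-3$, i.e. $\psi(t) + \log\psi(t) \geq t-3$. This is where I would invoke the reformulated strong openness statement (or rather its quantitative engine) from the earlier part of the paper: the inequality $\int_{D_t}|F|^2 e^{-\varphi} \geq h^{-1}(t-3) C_{F,\varphi,t}(z_0)$ is exactly the ``optimal asymptoticity'' sharpening, so the proof should mirror the proof of the strong openness conjecture in \cite{GZopen} but tracking constants through the Ohsawa--Takegoshi extension with the weight $\max\{\varphi, -t\}$ rather than $\varphi$, and with the cutoff of $\varphi$ contributing the extra $\log$.

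The hard part will be the constant tracking in the $L^2$-extension step: one needs the extension of $F$ from (a neighborhood of) $z_0$ to $D_t$ with an $L^2$ bound that degrades by \emph{exactly} the factor encoded in $h$, not merely by ``some universal constant''. Getting the sharp $h(x)=x+\log x$ rather than a cruder bound requires choosing the truncation level and the auxiliary plurisubharmonic weight in the $\bar\partial$-estimate optimally — balancing the $e^{t}$-type gain on the region $\{\varphi<-t\}$ against the $\log$-sized loss from the cutoff — and then verifying that the resulting inequality for $\psi$ is saturated precisely by $h^{-1}(t-3)$. I expect the shift $-3$ to absorb a small number of elementary estimates (a factor $2$ from splitting a $\bar\partial$-solution into two pieces, plus lower-order terms), so once the functional inequality $\psi + \log\psi \geq t - 3$ is in hand, the conclusion \eqref{equ:20170430} follows immediately by applying $h^{-1}$, which is increasing on $(0,+\infty)$.
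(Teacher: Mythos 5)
Your proposal captures the right high-level ingredients — a $\bar\partial$-estimate with a cut-off of $\varphi$ at a truncation level, the appearance of a functional inequality $\psi + \log\psi \geq t-3$, and inverting the increasing function $h$ — but it stops short of a proof precisely where the content lies. The functional inequality is not derived; you write $G(t)\geq c(t)\psi(t)$ with $\psi(t)=G(t)/c(t)$, which is a tautology, and you acknowledge that the ``hard part will be the constant tracking.'' More importantly, the mechanism you describe does not obviously produce the $\log$ term. You propose modifying a near-minimizer $F_1$ of $C_{F,\varphi,t}(z_0)$ to get a competitor with smaller norm and ``optimizing over the truncation parameter'' — a single-parameter optimization. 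In the paper the argument goes the other way: for each thin layer $\{-(k+1)B_0<\varphi<-kB_0\}$ one applies the Guan--Zhou $\bar\partial$-estimate (Lemma \ref{p:GZ_JM_sharp}) to build a competitor $F_{v,kB_0}$ \emph{from $F$}, so that $\int_{D_t}|F_{v,kB_0}|^2\geq C_2$ automatically; combined with $\int_{D_t}|(1-b_{kB_0}(\varphi))F|^2\leq C_1e^{-kB_0}$ this lower-bounds the layer integral $\int_D(\mathbbm{1}_{\{-(k+1)B_0<\cdot<-kB_0\}}\circ\varphi)|F|^2e^{-\varphi}$ by roughly $B_0(\sqrt{C_2}-\sqrt{C_1e^{-kB_0}})^2$. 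One then \emph{sums over $k$ from $k_0\approx\frac{1}{B_0}\log\frac{C_1}{C_2}$ to $[t/B_0]-1$} and lets $B_0\to0$; the constant term $C_2$ in $(\sqrt{C_2}-\sqrt{C_1e^{-kB_0}})^2$ accumulates over about $\frac{1}{B_0}(t-\log\frac{C_1}{C_2})$ layers, and it is this count of layers, tied to the starting index $k_0$, that produces $C_2(t+\log\frac{C_2}{C_1})$ on the left of inequality \eqref{equ:effect20140126.9} and hence the $\log$ in $h(x)=x+\log x$. Your single-truncation split $\int_{D_t}|F|^2e^{-\varphi}\geq e^s\int_{\{-t\leq\varphi<-s\}}|F|^2+\int_{\{\varphi\geq-s\}}|F|^2e^{-\varphi}$ only yields a bound of the shape $(\sqrt{C_2}-\sqrt{C_1e^{-s}})^2\leq C_1$, which gives at best $\frac{C_1}{C_2}\geq\mathrm{const}$, far short of $h^{-1}(t-3)$.

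As a smaller point, your monotonicity remark is reversed: $D_t=\{\varphi\geq-t\}$ \emph{grows} as $t$ increases, so $C_{F,\varphi,t}(z_0)$ is nondecreasing in $t$, not nonincreasing. The ``$-3$'' shift also does not come from ``splitting a $\bar\partial$-solution into two pieces''; in the paper it arises from the elementary bound $-4\bigl(\tfrac{C_2}{C_1}\bigr)^{1/2}\bigl((\tfrac{C_2}{C_1})^{1/2}-e^{-t/2}\bigr)+\bigl(\tfrac{C_2}{C_1}-e^{-t}\bigr)\geq-3\tfrac{C_2}{C_1}$ applied to the exact inequality from Proposition \ref{p:effect} with $p=1$, valid because $e^{-t}<C_2/C_1$. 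So while your intuition points in the right direction, the missing summation-over-layers step and the unworked ``optimization'' constitute a genuine gap: without them one cannot recover the precise function $h$.
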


The following remark gives the optimal asymptoticity near $+\infty$ of the support function $h^{-1}(t-3)$ in inequality \ref{equ:20170430}.

\begin{Remark}
Take $D=\Delta\subset\mathbb{C}$, $z_{0}=o$ the origin of $\mathbb{C}$, $F\equiv1$ and $\varphi=2\log|z|$.
Note that $C_{F,\varphi,t}(z_{0})=\pi(1-e^{-t})$ and $\int_{D_t}|F|^{2}e^{-\varphi}d\lambda_n=t\pi$,
then $\frac{\int_{D_t}|F|^{2}e^{-\varphi}d\lambda_n}{C_{F,\varphi,t}(z_{0})}=\frac{t}{1-e^{-t}}$.
One can obtain $\lim_{t\to+\infty}\frac{h^{-1}(t-3)}{\frac{t}{1-e^{-t}}}=1$,
which implies the optimal asymptoticity of the support function $h^{-1}(t-3)$ in inequality \ref{equ:20170430}, when $t$ goes to $+\infty$.
\end{Remark}

By equality \ref{equ:0729c},
it follows that Theorem \ref{thm 1} implies the truth of Demailly's strong openness conjecture.

\begin{Corollary}\cite{GZopen}
$\mathscr{I}_{+}(\varphi)=\mathscr{I}(\varphi).$
\end{Corollary}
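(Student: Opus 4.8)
The plan is to derive $\mathscr{I}_{+}(\varphi)=\mathscr{I}(\varphi)$ from Theorem \ref{thm 1} together with the reformulation recorded in Section \ref{sec:reformulation}, with essentially no further analytic input. First I would dispose of the easy inclusion: since $\varphi<0$, for each $\varepsilon>0$ we have $(1+\varepsilon)\varphi\le\varphi$, hence $e^{-(1+\varepsilon)\varphi}\ge e^{-\varphi}$ and $\mathscr{I}((1+\varepsilon)\varphi)\subseteq\mathscr{I}(\varphi)$; unioning over $\varepsilon>0$ gives $\mathscr{I}_{+}(\varphi)\subseteq\mathscr{I}(\varphi)$. For the reverse inclusion, by the equivalence proved in Section \ref{sec:reformulation} it suffices to establish, for every pseudoconvex $D\subset X$, every $z_{0}\in D$ and every $F\in\mathcal{O}(D)$, the implication: \emph{if} $\lim_{t\to+\infty}\frac{\int_{D_t}|F|^{2}e^{-(1+\varepsilon)\varphi}d\lambda_n}{C_{F,(1+\varepsilon)\varphi,t}(z_{0})}=+\infty$ holds for every $\varepsilon>0$, \emph{then} equality \ref{equ:0729c} holds.

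The key observation is that the lower bound in Theorem \ref{thm 1} is uniform in the weight, so it may be fed back into itself. Fix $\varepsilon>0$; then $(1+\varepsilon)\varphi$ is again a negative plurisubharmonic function on $D$, and applying inequality \ref{equ:20170430} of Theorem \ref{thm 1} to the data $(D,F,(1+\varepsilon)\varphi,z_{0})$ yields
\[
\frac{\int_{D_t}|F|^{2}e^{-(1+\varepsilon)\varphi}d\lambda_n}{C_{F,(1+\varepsilon)\varphi,t}(z_{0})}\ \ge\ h^{-1}(t-3)\qquad\text{for all }t\in(0,+\infty),
\]
with the stated convention that the left-hand side is $+\infty$ whenever $C_{F,(1+\varepsilon)\varphi,t}(z_{0})$ is $0$ or $+\infty$, so that the inequality is unconditional. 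Since $h(x)=x+\log x$ is increasing on $(0,+\infty)$ and $h(x)\to+\infty$ as $x\to+\infty$, its inverse satisfies $h^{-1}(t-3)\to+\infty$ as $t\to+\infty$.

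Letting $t\to+\infty$ in the displayed inequality, the left side tends to $+\infty$ for every $\varepsilon>0$, so the hypothesis of the implication above is \emph{automatically} satisfied. Moreover, for each fixed $t$ every member of the $\varepsilon$-family $\frac{\int_{D_t}|F|^{2}e^{-(1+\varepsilon)\varphi}d\lambda_n}{C_{F,(1+\varepsilon)\varphi,t}(z_{0})}$ is $\ge h^{-1}(t-3)$, hence so is $\lim_{\varepsilon\to0^{+}}$ (or the lower limit, if one prefers) of that quantity, and therefore $\lim_{t\to+\infty}$ of it is $\ge\lim_{t\to+\infty}h^{-1}(t-3)=+\infty$; that is, equality \ref{equ:0729c} holds. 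Consequently the implication required by Section \ref{sec:reformulation} holds for all admissible $D$, $z_{0}$, $F$ — indeed it holds trivially, its conclusion being true independently of the hypothesis — and the equivalence of Section \ref{sec:reformulation} then gives $\mathscr{I}_{+}(\varphi)=\mathscr{I}(\varphi)$. As $\varphi$ was an arbitrary negative plurisubharmonic function, this is Demailly's strong openness conjecture. I expect no genuine obstacle in this deduction: all the difficulty is absorbed into Theorem \ref{thm 1} (the optimal-constant $L^2$ estimate producing the support function $h^{-1}(t-3)$) and into the reformulation of Section \ref{sec:reformulation}; the argument above is just the formal remark that a divergence bound independent of $D$, $\varphi$ and $F$ forces the relevant ratios to blow up.
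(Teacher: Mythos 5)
Your proposal is correct and follows exactly the route the paper intends: apply Theorem \ref{thm 1} to the negative plurisubharmonic weight $(1+\varepsilon)\varphi$ to get the $\varepsilon$-uniform lower bound $h^{-1}(t-3)$, conclude that equality \ref{equ:0729c} holds unconditionally, and then invoke the reformulation of Section \ref{sec:reformulation}. The only point the paper leaves implicit and you spell out (harmlessly) is the trivial inclusion $\mathscr{I}_{+}(\varphi)\subseteq\mathscr{I}(\varphi)$ and the convention handling the degenerate cases $C_{F,(1+\varepsilon)\varphi,t}(z_0)\in\{0,+\infty\}$.
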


\section{Preparations}

\subsection{The equivalent statement of the truth of Demailly's strong openness conjecture}\label{sec:reformulation}

By the dominated convergence theorem, it follows that
\begin{equation}
\label{equ:0729a}
\lim_{\varepsilon\to0^+}\int_{D_t}|F|^{2}e^{-(1+\varepsilon)\varphi}d\lambda_{n}=\int_{D_t}|F|^{2}e^{-\varphi}d\lambda_{n}.
\end{equation}
By the definition of $C_{F,(1+\varepsilon)\varphi,t}(z_{0})$,
it follows that $\lim_{\varepsilon\to0^+}C_{F,(1+\varepsilon)\varphi,t}(z_{0})=$the infimum of $\int_{D_t}|F_{1}|^{2}d\lambda_{n}$ for all $F_{1}\in\mathcal{O}(D)$ satisfying condition $(F_{1}-F,z_{0})\in\mathscr{I}_{+}(\varphi)_{z_{0}}$.
Then it suffices to consider that equality \ref{equ:0729c} implies the truth of Demailly's strong openness conjecture.

If not, i.e., there exists $z_{0}\in X$ satisfying $\mathscr{I}_{+}(\varphi)_{z_{0}}\subsetneq\mathscr{I}(\varphi)_{z_{0}}$,
then there exists holomorphic function $F$ near $z_{0}$ such that
$(F,z_{0})\in\mathscr{I}(\varphi)_{z_{0}}$ and $(F,z_{0})\not\in\mathscr{I}_{+}(\varphi)_{z_{0}}$.
Choosing $D$ small enough, it follows from $(F,z_{0})\in\mathscr{I}(\varphi)_{z_{0}}$ that $\int_{D}|F|^{2}e^{-\varphi}d\lambda_{n}<+\infty$,
which implies
\begin{equation}
\label{equ:0729b}\lim_{t\to+\infty}\int_{D_t}|F|^{2}e^{-\varphi}d\lambda_{n}=\int_{D}|F|^{2}e^{-\varphi}d\lambda_{n}<+\infty.
\end{equation}
Note that $C_{F,(1+\varepsilon)\varphi,t}(z_{0})$ is upper bounded with respect to $t$ for $D$ small enough,
it follows that
$$\lim_{t\to+\infty}\frac{\int_{D_t}|F|^{2}e^{-(1+\varepsilon)\varphi}d\lambda_n}{C_{F,(1+\varepsilon)\varphi,t}(z_{0})}=+\infty$$
for any $\varepsilon>0$.
As $(F,z_{0})\not\in\mathscr{I}_{+}(\varphi)_{z_{0}}$,
it follows that the infimum of $\int_{D_t}|F_{1}|^{2}d\lambda_{n}$ for all $F_{1}\in\mathcal{O}(D)$ satisfying condition $(F_{1}-F,z_{0})\in\mathscr{I}_{+}(\varphi)_{z_{0}}$ is bigger than a positive constant $C_{0}$ for $t$ large enough.
Combining with inequality \ref{equ:0729b},
one can obtain that
$$\lim_{t\to+\infty}(\lim_{\varepsilon\to0^+}\frac{\int_{D_t}|F|^{2}e^{-(1+\varepsilon)\varphi}d\lambda_n}{C_{F,(1+\varepsilon)\varphi,t}(z_{0})})
\leq\lim_{t\to+\infty}\frac{\int_{D_t}|F|^{2}e^{-\varphi}d\lambda_n}{C_{0}}
=\frac{\int_{D}|F|^{2}e^{-\varphi}d\lambda_n}{C_{0}}<+\infty,$$
which contradicts equality \ref{equ:0729c}.
Then equality \ref{equ:0729c} implies the truth of Demailly's strong openness conjecture.

\subsection{$\bar\partial-$equation with $L^{2}$ estimates}

We prove Proposition \ref{p:effect} by the following Lemma, whose various forms have already appeared in \cite{guan-zhou13p,GZeffective}.

\begin{Lemma} \label{p:GZ_JM_sharp}
Let $B_{0}\in(0,1]$ be arbitrarily given. Let $U_{v}$ be a strongly pseudoconvex domain relatively compact in pseudoconvex domain
$D\subseteq\mathbb{C}^n$ containing $o$. Let $F$ be a holomorphic function on $D$. Let $\varphi$ be a negative plurisubharmonic function on $D$, such that $u(o)=-\infty$. Then there exists a holomorphic function $F_{v,t_{0}}$ on $U_{v}$, such that,$$(F_{v,t_{0}}-F,o)\in\mathscr{I}(\varphi)_{o}$$ and
\begin{equation}
\label{equ:3.4}
\begin{split}
&\int_{ U_v}|F_{v,t_0}-(1-b_{t_0}(\varphi))F|^{2}d\lambda_{n}\\
\leq&(1-e^{-(t_{0}+B_{0})})\int_{U_v}\frac{1}{B_{0}}(\mathbbm{1}_{\{-t_{0}-B_{0}<t<-t_{0}\}}\circ \varphi)|F|^{2}e^{-\varphi}d
\lambda_{n},
\end{split}
\end{equation}
where $b_{t_{0}}(t)=\int_{-\infty}^{t}\frac{1}{B_{0}}\mathbbm{1}_{\{-t_{0}-B_{0}< s<-t_{0}\}}ds$, and $t_{0}$ is a positive number.
\end{Lemma}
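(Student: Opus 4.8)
\emph{Proof proposal.}\quad The plan is to follow the scheme of \cite{guan-zhou13p,GZeffective}: reduce to a smooth strictly plurisubharmonic weight, solve the relevant $\bar\partial$-equation by a weighted $L^2$-estimate in which a convex reparametrization of $\varphi$ supplies exactly the curvature demanded by the $\bar\partial$-datum, and then pass to the limit. I would first introduce the antiderivative $v_{t_0}$ of $b_{t_0}$, normalized so that $v_{t_0}(t)=t$ for $t\ge -t_0$; then $v_{t_0}'=b_{t_0}\ge 0$ and $v_{t_0}''=b_{t_0}'=\frac1{B_0}\mathbbm{1}_{\{-t_0-B_0<s<-t_0\}}\ge 0$, so $v_{t_0}$ is convex and increasing, $t\le v_{t_0}(t)\le 0$ for $t<0$, and $v_{t_0}(t)$ equals a fixed negative constant for $t\le -t_0-B_0$. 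The key structural fact is that $v_{t_0}(\varphi)$ is plurisubharmonic with $i\partial\bar\partial\bigl(v_{t_0}(\varphi)\bigr)=b_{t_0}(\varphi)\,i\partial\bar\partial\varphi+b_{t_0}'(\varphi)\,i\partial\varphi\wedge\bar\partial\varphi\ge b_{t_0}'(\varphi)\,i\partial\varphi\wedge\bar\partial\varphi$, while, since $F$ is holomorphic, $\bar\partial\bigl((1-b_{t_0}(\varphi))F\bigr)=-b_{t_0}'(\varphi)F\,\bar\partial\varphi$ is a $(0,1)$-form supported on $\{-t_0-B_0<\varphi<-t_0\}$ and pointwise proportional to $\bar\partial\varphi$, i.e.\ of exactly the shape absorbed by that curvature term.

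Since $U_v\Subset D$ is strongly pseudoconvex, I would then fix smooth strictly plurisubharmonic $\varphi_m$ on a neighbourhood of $\overline{U_v}$ decreasing to $\varphi$, prove the estimate for each $\varphi_m$, and let $m\to\infty$ (alternatively one invokes directly a form of the $L^2$-estimate valid for plurisubharmonic weights, as in the cited papers). For fixed $m$, solve $\bar\partial\gamma_m=\bar\partial\bigl(b_{t_0}(\varphi_m)F\bigr)$ on $U_v$ by a twisted $L^2$-estimate for $\bar\partial$ (of Donnelly--Fefferman/Berndtsson type) with weight $e^{-\varphi_m}$ and positive auxiliary functions $\eta=\eta(\varphi_m)$, $g=g(\varphi_m)$ for which $\Xi:=(\eta-\eta')\,i\partial\bar\partial\varphi_m-(\eta''+g^{-1}\eta'^2)\,i\partial\varphi_m\wedge\bar\partial\varphi_m\ge 0$; this yields $\gamma_m$ with $\int_{U_v}\frac{|\gamma_m|^2}{(\eta+g)(\varphi_m)}e^{-\varphi_m}d\lambda_n\le\int_{U_v}\bigl|\bar\partial(b_{t_0}(\varphi_m)F)\bigr|^2_{\Xi}\,e^{-\varphi_m}d\lambda_n$, and since on the shell $\Xi\ge-(\eta''+g^{-1}\eta'^2)(\varphi_m)\,i\partial\varphi_m\wedge\bar\partial\varphi_m$ while the datum is proportional to $\bar\partial\varphi_m$, the right-hand side is $\le\int_{U_v}\frac{\mathbbm{1}_{\{-t_0-B_0<\varphi_m<-t_0\}}\,|F|^2}{B_0^2\,(-\eta''-g^{-1}\eta'^2)(\varphi_m)}\,e^{-\varphi_m}d\lambda_n$. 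The functions $\eta,g$ are then to be determined by solving an associated ODE, imposing: $\eta\ge\eta'$ and $\eta''+g^{-1}\eta'^2\le 0$ throughout (so $\Xi\ge 0$), with $-\eta''-g^{-1}\eta'^2\equiv\frac{1}{B_0(1-e^{-(t_0+B_0)})}$ on $(-t_0-B_0,-t_0)$, so that the last integral equals $(1-e^{-(t_0+B_0)})\int_{U_v}\frac1{B_0}\mathbbm{1}_{\{-t_0-B_0<\varphi_m<-t_0\}}|F|^2e^{-\varphi_m}d\lambda_n$; $(\eta+g)(s)\le e^{-s}$ on the range of $\varphi_m$, so that $\int_{U_v}|\gamma_m|^2d\lambda_n$ is dominated by the weighted integral; and $\eta+g$ bounded as $s\to-\infty$, so that finiteness of the weighted integral forces $\int_V|\gamma_m|^2e^{-\varphi_m}d\lambda_n<\infty$ on a neighbourhood $V$ of $o$ (where $\varphi_m(o)=-\infty$, hence $b_{t_0}(\varphi_m)\equiv 0$). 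Set $F_{v,t_0}^{(m)}:=(1-b_{t_0}(\varphi_m))F+\gamma_m$; this is holomorphic, and $F_{v,t_0}^{(m)}-F=\gamma_m$ on $V$.

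The combined bound makes $\{F_{v,t_0}^{(m)}\}$ bounded in $L^2(U_v)$ (the right-hand side being $\le\frac1{B_0}e^{t_0+B_0}\int_{U_v}|F|^2d\lambda_n<\infty$, uniformly in $m$), so along a subsequence $F_{v,t_0}^{(m)}\to F_{v,t_0}$ locally uniformly, $F_{v,t_0}$ holomorphic on $U_v$, and $\gamma_m\to\gamma:=F_{v,t_0}-(1-b_{t_0}(\varphi))F$ a.e.\ (using $b_{t_0}(\varphi_m)\to b_{t_0}(\varphi)$). Fatou's lemma gives $\int_V|\gamma|^2e^{-\varphi}d\lambda_n<\infty$, i.e.\ $(F_{v,t_0}-F,o)\in\mathscr{I}(\varphi)_o$, as well as $\int_{U_v}|\gamma|^2d\lambda_n\le(1-e^{-(t_0+B_0)})\,\liminf_m\int_{U_v}\frac1{B_0}\mathbbm{1}_{\{-t_0-B_0<\varphi_m<-t_0\}}|F|^2e^{-\varphi_m}d\lambda_n$; since $\mathbbm{1}_{\{-t_0-B_0<\varphi_m<-t_0\}}\to\mathbbm{1}_{\{-t_0-B_0<\varphi<-t_0\}}$ pointwise off $\{\varphi=-t_0-B_0\}\cup\{\varphi=-t_0\}$ and the integrands are dominated by $|F|^2e^{t_0+B_0}$, dominated convergence turns this into inequality (\ref{equ:3.4}) --- provided those two level sets are $|F|^2d\lambda_n$-null, and for the countably many exceptional values of $t_0$ one concludes by a monotone limit in $t_0$ (the left side being lower semicontinuous in $t_0$, the right side monotone).

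I expect the essential difficulty to be precisely the construction of $\eta$ and $g$ --- the right twisted $L^2$-identity together with the Riccati-type ODE it forces --- delivering the \emph{sharp} constant $1-e^{-(t_0+B_0)}$ rather than the crude constant of order $e^{t_0}$ that a plain H\"ormander estimate with weight $e^{-\varphi-v_{t_0}(\varphi)}$ already yields; the regularization, the passage to the limit, and the level-set technicality are routine.
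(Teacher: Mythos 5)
The paper itself offers \emph{no proof} of this lemma: it is introduced only with the remark that its ``various forms have already appeared in \cite{guan-zhou13p,GZeffective}'' and is then used as a black box in the proof of Proposition~\ref{p:effect}. So your proposal is not being measured against an argument in this paper but against the Guan--Zhou machinery the paper delegates to, and in outline you have reconstructed it correctly: you identify that $\bar\partial\bigl((1-b_{t_0}(\varphi))F\bigr)=-b_{t_0}'(\varphi)F\,\bar\partial\varphi$ is supported on the shell $\{-t_0-B_0<\varphi<-t_0\}$ and is proportional to $\bar\partial\varphi$; you regularize $\varphi$ by smooth strictly plurisubharmonic $\varphi_m$; you solve by a twisted $L^2$ estimate with a weight--twist pair $(\eta,g)$ satisfying a curvature inequality; and you pass to the limit with a Fatou/dominated-convergence argument and a remark about the null level sets. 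All of this matches the cited scheme.

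The proposal nevertheless has a genuine gap, and it sits precisely where you yourself locate ``the essential difficulty'': no $\eta,g$ are produced, and the list of constraints you write down is not shown to be consistent, nor is it clearly the right list. You take $\eta,g$ as functions of $\varphi_m$ with weight $e^{-\varphi_m}$ and require simultaneously $\eta\ge\eta'$, $\eta''+g^{-1}\eta'^2\le 0$ with $-\eta''-g^{-1}\eta'^2\equiv\frac{1}{B_0(1-e^{-(t_0+B_0)})}$ on the shell, $(\eta+g)(s)\le e^{-s}$ on $(-\infty,0)$, and $\eta+g$ bounded near $-\infty$. A short attempt to patch such $(\eta,g)$ together (e.g.\ $\eta$ constant on $(-t_0,0)$, hence forced to leave the shell at $s=-t_0-B_0$ with $\eta'=\frac{1}{1-e^{-(t_0+B_0)}}\ne 0$, after which $\eta''+g^{-1}\eta'^2\le 0$ must still hold all the way down) shows the matching is delicate and the feasibility is not obvious. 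In the Guan--Zhou construction the twist and the residual weight are composed with the convex reparametrization $v_{t_0}(\varphi)$ (which you introduce at the start but then drop from the $L^2$ estimate itself); that reparametrization is what truncates the range of the argument of $(\eta,g)$ near the pole and turns the Riccati-type condition into an ODE with the specific solution that yields the sharp factor $1-e^{-(t_0+B_0)}$. Without making this explicit the sharp constant is asserted rather than obtained; a crude H\"ormander-type bound, which you also note, does not give it. In short: the surrounding structure (the form of the $\bar\partial$-datum, the regularization, the limiting argument, and the level-set technicality) is handled correctly, but the heart of the lemma --- the explicit $(\eta,g)$ and the verification that they satisfy a consistent set of inequalities delivering the stated constant --- is left open, and the particular set of requirements you propose is not demonstrated to be solvable.
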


\begin{Remark}
\label{r:GZ_JM_sharp}
Replacing the strong pseudoconvexity of $U_{v}$ by pseudoconvexity, and $o$ by $z_{0}\in U_{v}$ for any $v$,
Lemma \ref{p:GZ_JM_sharp} also holds. We may take $U_v\subset\subset U_{v+1}$ and $\cup U_v=D$. In the following, by approximation, we
may take $D$ instead of $U_v$.
\end{Remark}

\subsection{A useful proposition}

Inspired by the proof of the main result in \cite{GZeffective}
and making some modifications, 
we obtain the following proposition.

\begin{Proposition}
\label{p:effect}
Let $C_{1}$, $C_{2}$ be two positive constants and $t\geq\log \frac{C_1}{C_2}$. Let $p>0$. We consider the set of $(F,\varphi,p)$ satisfying
\\
$(1)$ $\int_{D_t}|F|^{2}e^{-\varphi}\leq C_{1}$;\qquad
$(2)$ $C_{F,p\varphi,t}(z_{0})\geq C_{2}$;\qquad\\
where $C_{F,p\varphi,t}(z_{0})$ is the infimum of $\int_{D_t}|F_{1}|^{2}d\lambda_{n}$ for all $F_{1}\in\mathcal{O}(D)$ satisfying condition $(F_{1}-F,z_{0})\in\mathscr{I}(p\varphi)_{z_{0}}$, and
$D_t=\{z\in D\big|\varphi\geq-t\}.$

Then, for $p\neq 1$ and $1/2$, we have
\begin{equation}
\label{equ:effect20140126.7}
\begin{split}
(1-\frac{1}{p})^{-1}(\frac{C_{2}}{C_{1}})((\frac{C_{2}}{C_{1}})^{p-1}-e^{-(p-1)t})-&2(1-\frac{1}{2p})^{-1}(\frac{C_{2}}{C_{1}})^{\frac{1}{2}}
((\frac{C_{2}}{C_{1}})^{p-\frac{1}{2}}-e^{-(p-\frac{1}{2})t})
\\ &+((\frac{C_{2}}{C_{1}})^{p}-e^{-pt})\leq1.
\end{split}
\end{equation}

For $p=1$, we have
\begin{equation}
\label{equ:effect20140126.9}
(\frac{C_{2}}{C_{1}})(t+\log \frac{C_{2}}{C_{1}})-4(\frac{C_{2}}{C_{1}})^{\frac{1}{2}}
((\frac{C_{2}}{C_{1}})^{\frac{1}{2}}-e^{-\frac{t}{2}})
+((\frac{C_{2}}{C_{1}})-e^{-t})\leq1.
\end{equation}

For $p=1/2$, we have
$$\frac{C_2}{C_1}(e^{\frac{t}{2}}-(\frac{C_2}{C_1})^{-\frac{1}{2}})-(\frac{C_2}{C_1})^{\frac{1}{2}}(t+\log \frac{C_2}{C_1})
+(\frac{C_2}{C_1})^{\frac{1}{2}}-e^{-\frac{t}{2}}\leq 1.$$
\end{Proposition}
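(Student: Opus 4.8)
The plan is to run an iteration scheme on the function $b_{t_0}$ from Lemma~\ref{p:GZ_JM_sharp}, in the style of \cite{GZeffective}, and then optimize over the free parameters $B_0$ and $t_0$. Concretely, I would first apply Lemma~\ref{p:GZ_JM_sharp} (in the $D$-instead-of-$U_v$ form of Remark~\ref{r:GZ_JM_sharp}) with the weight $p\varphi$ in place of $\varphi$, obtaining a holomorphic $F_{t_0}$ on $D$ with $(F_{t_0}-F,z_0)\in\mathscr{I}(p\varphi)_{z_0}$ and the $L^2$ bound \eqref{equ:3.4}. The point is that $F_{t_0}-(1-b_{t_0}(p\varphi))F$ differs from $F_{t_0}-F$ by a compactly-in-$D_t$-supported correction involving $b_{t_0}(p\varphi)F$, so one estimates
\begin{equation*}
\left(\int_{D_t}|F_{t_0}|^2\,d\lambda_n\right)^{1/2}\leq \left(\int_{D_t}|F_{t_0}-(1-b_{t_0}(p\varphi))F|^2\,d\lambda_n\right)^{1/2}+\left(\int_{D_t}|(1-b_{t_0}(p\varphi))F|^2\,d\lambda_n\right)^{1/2},
\end{equation*}
and the left side is $\geq C_{F,p\varphi,t}(z_0)^{1/2}\geq C_2^{1/2}$ by hypothesis~(2). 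The right-hand side is controlled by hypothesis~(1) once one bounds $(1-b_{t_0}(p\varphi))^2$ and $\mathbbm{1}_{\{-t_0-B_0<p\varphi<-t_0\}}e^{-p\varphi}$ pointwise in terms of $e^{-\varphi}$ on the relevant superlevel pieces; this is where the exponents $p-1$, $p-\frac12$, $p$ in \eqref{equ:effect20140126.7} come from, since $e^{-p\varphi}=e^{-\varphi}\cdot e^{-(p-1)\varphi}$ and on $\{p\varphi<-t_0\}$ one has $\varphi<-t_0/p$, etc.

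Next I would integrate the resulting differential-type inequality over $t_0$. The correct device is to multiply the pointwise inequality by a density and integrate $t_0$ from $0$ to $\infty$ (or up to $t$), so that $\int_0^{\infty}\frac{1}{B_0}\mathbbm{1}_{\{-t_0-B_0<s<-t_0\}}\,dt_0$ telescopes; letting $B_0\to 0^+$ turns the indicator average into an evaluation and converts $1-b_{t_0}$ into a sharp cutoff at level $-t_0$. After this limit the three terms on the left of \eqref{equ:effect20140126.7} appear as $\int_{\log(C_1/C_2)}^{t}(\cdots)$ of powers $e^{(p-1)s}$, $e^{(p-\frac12)s}$, $e^{ps}$ against the measure $\int_{D_t\cap\{\varphi\geq -s\}}|F|^2e^{-\varphi}$, normalized by dividing through by $C_1$; the hypotheses give $\int_{D_t}|F|^2e^{-\varphi}\leq C_1$ and the lower cutoff $t\geq\log\frac{C_1}{C_2}$ exactly makes $\left(\frac{C_2}{C_1}\right)$-type boundary terms the right ones. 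Carrying out the elementary antiderivatives $\int e^{(p-1)s}\,ds=\frac{1}{p-1}e^{(p-1)s}$ and $\int e^{(p-\frac12)s}\,ds=\frac{1}{p-\frac12}e^{(p-\frac12)s}$ produces the factors $(1-\frac1p)^{-1}$ and $(1-\frac1{2p})^{-1}$ and the differences $\left(\frac{C_2}{C_1}\right)^{p-1}-e^{-(p-1)t}$ etc., after a change of variable $s\mapsto t-s$ or $s\mapsto -\log$ of the normalized integral. The cases $p=1$ and $p=1/2$ are then just the degenerate antiderivatives: when $p=1$ the factor $\frac{1}{p-1}$ blows up and is replaced by $\int_{\log(C_1/C_2)}^{t}1\,ds=t-\log\frac{C_1}{C_2}=t+\log\frac{C_2}{C_1}$, giving \eqref{equ:effect20140126.9}; when $p=1/2$ the $\frac{1}{p-\frac12}$ factor similarly degenerates to a linear term $t+\log\frac{C_2}{C_1}$, and the $e^{(p-1)s}=e^{-s/2}$ antiderivative survives, yielding the last displayed inequality. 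The constant $2$ in front of the middle term (and $4=2\cdot 2$ when $p=1$) is the cross term from squaring the triangle inequality.

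The main obstacle I expect is getting the pointwise estimates and the integration bookkeeping to produce \emph{exactly} these constants rather than something off by a factor — in particular, matching the cutoff point $t\geq\log\frac{C_1}{C_2}$ with the lower limit of integration so that the minimizing choice of $t_0$ (equivalently, the substitution relating $t_0$, $s$, and $\frac{C_2}{C_1}$) is admissible, and handling the limit $B_0\to 0^+$ rigorously (the left side of \eqref{equ:3.4} is lower semicontinuous in an appropriate sense, and $b_{t_0}(p\varphi)\to\mathbbm{1}_{\{p\varphi\geq -t_0\}}$ pointwise). A secondary subtlety is that $C_{F,p\varphi,t}(z_0)$ is defined as an infimum, so to use hypothesis~(2) one must check that the particular $F_{t_0}$ produced by the Lemma is an admissible competitor, i.e.\ $(F_{t_0}-F,z_0)\in\mathscr{I}(p\varphi)_{z_0}$, which is precisely what the Lemma guarantees; and one should note $F_{t_0}$ restricted to $D_t$ is what enters $C_{F,p\varphi,t}$, consistent with the definition integrating over $D_t$. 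Once the scheme is set up, everything else is the elementary calculus of exponentials indicated above, with the three cases $p\neq 1,\tfrac12$, $p=1$, $p=\tfrac12$ distinguished solely by which antiderivative degenerates.
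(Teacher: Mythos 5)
Your proposal is correct and follows essentially the same route as the paper: apply Lemma~\ref{p:GZ_JM_sharp} (with $\psi=p\varphi$), use the reverse triangle inequality in $L^2(D_t)$ together with the admissibility of $F_{t_0}$ (hypothesis (2)) and the bound $\|(1-b_{t_0}(\psi))F\|^2_{L^2(D_t)}\leq C_1 e^{-t_0/p}$ (hypothesis (1)), multiply by the weight $\sim e^{-(t_0+B_0)}e^{t_0/p}$, and sum/integrate in $t_0$ over $[p\log\frac{C_1}{C_2},pt]$ so that the right side collapses to $\int_{D_t}|F|^2e^{-\varphi}\leq C_1$ while the left side gives the three exponential integrals whose antiderivatives produce $(1-\frac1p)^{-1}$, $(1-\frac1{2p})^{-1}$, and degenerate into $t+\log\frac{C_2}{C_1}$ when $p=1$ or $p=\frac12$. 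The only presentational difference is that you phrase the $t_0$-accumulation as a continuous integral against a density with $B_0\to 0^+$, whereas the paper discretizes $t_0=kB_0$, forms the Riemann sum over $k$ from $k_0$ (with $e^{k_0B_0/p}\geq C_1/C_2$) to $[pt/B_0]-1$, and then takes $B_0\to 0$; these are the same computation in the limit.
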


\begin{proof}

As $C_{F,p\varphi,t}(z_{0})\geq C_{2}>0$,
then we obtain $\varphi(z_{0})=-\infty$.

Let $\psi:=p\varphi$.

Using Lemma \ref{p:GZ_JM_sharp} and Remark \ref{r:GZ_JM_sharp},
we obtain that
\begin{equation}
\label{equ:effect20140120.1}
\begin{split}
&((\int_{D_t}|F_{v,t_0}|^{2}d\lambda_{n})^{1/2}-(\int_{D_t}|(1-b_{t_0}(\psi))F|^{2}d\lambda_{n})^{1/2})^{2}
\\&\leq \int_{D_t}|F_{v,t_0}-(1-b_{t_0}(\psi))F|^{2}d\lambda_{n}
\\&\leq \int_{D}|F_{v,t_0}-(1-b_{t_0}(\psi))F|^{2}d\lambda_{n}
\\&\leq(1-e^{-(t_{0}+B_{0})})\int_{D}\frac{1}{B_{0}}(\mathbbm{1}_{\{-t_{0}-B_{0}<t<-t_{0}\}}\circ\psi)|F|^{2}e^{-\psi}d\lambda_{n}.
\end{split}
\end{equation}
Note that
\begin{equation}
\label{equ:effect20140122.2}
\begin{split}
&(1-e^{-(t_{0}+B_{0})})\int_{D}\frac{1}{B_{0}}(\mathbbm{1}_{\{-t_{0}-B_{0}<t<-t_{0}\}}\circ\psi)|F|^{2}e^{-\psi}d\lambda_{n}
\\&\leq(e^{t_{0}+B_{0}}-1)
\int_{D}\frac{1}{B_{0}}(\mathbbm{1}_{\{-t_{0}-B_{0}<t<-t_{0}\}}\circ\psi)|F|^{2}d\lambda_{n},
\end{split}
\end{equation}
Then we have
\begin{equation}
\label{equ:effect20140122.3}
\begin{split}
&((\int_{D_t}|F_{v,t_0}|^{2}d\lambda_{n})^{1/2}-(\int_{D_t}|(1-b_{t_0}(\psi))F|^{2}d\lambda_{n})^{1/2})^{2}
\\&\leq(e^{t_{0}+B_{0}}-1)
\int_{D}\frac{1}{B_{0}}(\mathbbm{1}_{\{-t_{0}-B_{0}<t<-t_{0}\}}\circ\psi)|F|^{2}d\lambda_{n},
\end{split}
\end{equation}
Note that
$$e^{\frac{1}{p}t_{0}}\int_{D_t}|(1-b_{t_0}(\psi))F|^{2}d\lambda_{n}\leq
e^{\frac{1}{p}t_{0}}\int_{D_t}|\mathbbm{1}_{\{\frac{1}{p}\psi\leq-\frac{1}{p}t_{0}\}}F|^{2}d\lambda_{n}\leq \int_{D_t}|F|^{2}e^{-\frac{1}{p}\psi}d\lambda_{n}$$
As
$$C_{1}\geq\int_{D_t}|F|^{2}e^{-\frac{1}{p}\psi}d\lambda_{n}=\int_{D_t}|F|^{2}e^{-\varphi}d\lambda_{n}.$$
and $\inf\{\int_{D_t}|F_{1}|^{2}d\lambda_{n}|F_{1}\in\mathcal{O}(D),(F_{1}-F,z_{0})\in\mathscr{I}(\psi)_{z_{0}}\}\geq C_{2}$,
when
$$e^{\frac{1}{p}t_{0}}\geq \frac{C_{1}}{C_{2}},$$
then we have
\begin{equation}
\label{equ:effect20140122.4}
\begin{split}
&(C_{2}^{1/2}-(C_{1}e^{-\frac{1}{p}t_{0}})^{1/2})^{2}
\\&\leq((\int_{D_t}|F_{v,t_0}|^{2}d\lambda_{n})^{1/2}-(\int_{D_t}|(1-b_{t_0}(\psi))F|^{2}d\lambda_{n})^{1/2})^{2}.
\end{split}
\end{equation}
It follows that
\begin{equation}
\label{equ:effect20140122.4}
\begin{split}
&(C_{2}^{1/2}-(C_{1}e^{-\frac{1}{p}t_{0}})^{1/2})^{2}
\\&\leq(e^{t_{0}+B_{0}}-1)
\int_{D}\frac{1}{B_{0}}(\mathbbm{1}_{\{-t_{0}-B_{0}<t<-t_{0}\}}\circ\psi)|F|^{2}d\lambda_{n}
\\&\leq e^{t_{0}+B_{0}}\int_{D}\frac{1}{B_{0}}(\mathbbm{1}_{\{-t_{0}-B_{0}<t<-t_{0}\}}\circ\psi)|F|^{2}d\lambda_{n}.
\end{split}
\end{equation}
Replacing $t_{0}$ by $kB_{0}$,
and assuming that $e^{\frac{1}{p}kB_{0}}\geq\frac{C_{1}}{C_{2}}$,
we obtain that
\begin{equation}
\label{equ:effect20140126.1}
\begin{split}
&(C_{2}^{1/2}-(C_{1}e^{-\frac{1}{p}kB_{0}})^{1/2})^{2}
\\&\leq e^{(k+1)B_{0}}\int_{D}\frac{1}{B_{0}}(\mathbbm{1}_{\{-(k+1)B_{0}<t<-kB_{0}\}}\circ\psi)|F|^{2}d\lambda_{n}.
\end{split}
\end{equation}
It follows that
\begin{equation}
\label{equ:effect20140126.3}
\begin{split}
& B_{0}e^{-(k+1)B_{0}}e^{\frac{1}{p}kB_{0}}(C_{2}^{1/2}-(C_{1}e^{-\frac{1}{p}kB_{0}})^{1/2})^{2}
\\&\leq e^{\frac{1}{p}kB_{0}}\int_{D}(\mathbbm{1}_{\{-(k+1)B_{0}<t<-kB_{0}\}}\circ\psi)|F|^{2}d\lambda_{n}
\\&\leq \int_{D}(\mathbbm{1}_{\{-(k+1)B_{0}/p<t<-kB_{0}/p\}}\circ\varphi)|F|^{2}e^{-\varphi}d\lambda_{n},
\end{split}
\end{equation}
the last inequality holds because $\psi=p\varphi$.

Taking $k_{0}$, such that
$$e^{\frac{1}{p}k_{0}B_{0}}\geq \frac{C_{1}}{C_{2}}\geq e^{\frac{1}{p}(k_{0}-1)B_{0}},$$
and taking sum, we obtain
\begin{equation}
\label{equ:effect20140126.4}
\begin{split}
&\sum_{k=k_{0}}^{[pt/B_0]-1}B_{0}e^{-(k+1)B_{0}}e^{\frac{1}{p}kB_{0}}(C_{2}^{1/2}-(C_{1}e^{-\frac{1}{p}kB_{0}})^{1/2})^{2}
\\&\leq \sum_{k=k_{0}}^{[pt/B_0]-1}\int_{D}(\mathbbm{1}_{\{-(k+1)B_{0}/p<t<-kB_{0}/p\}}\circ\varphi)|F|^{2}e^{-\varphi}d\lambda_{n}
\\&\leq\int_{D_t}|F|^{2}e^{-\varphi}d\lambda_{n}\leq C_{1}.
\end{split}
\end{equation}
Note that when $p>0$, $p\neq1$ and $p\neq 1/2$, we have
\begin{equation}
\label{equ:effect20140126.5}
\begin{split}
&\sum_{k=k_{0}}^{[pt/B_0]-1}B_{0}e^{-(k+1)B_{0}}e^{\frac{1}{p}kB_{0}}(C_{2}^{1/2}-(C_{1}e^{-\frac{1}{p}kB_{0}})^{1/2})^{2}
\\=&\sum_{k=k_{0}}^{[pt/B_0]-1}B_0e^{-B_0}(e^{(\frac{1}{p}-1)kB_0}C_{2}-
2e^{(\frac{1}{2p}-1)kB_{0}}C_{2}^{1/2}C_{1}^{1/2}+e^{-kB_{0}}C_{1})
\\=&B_0e^{-B_0}(C_2\frac{e^{-(1-\frac{1}{p})k_0B_{0}}-e^{-(1-\frac{1}{p})[\frac{pt}{B_0}]B_0}}{1-e^{-(1-\frac{1}{p})B_{0}}}
\\&-2C_{2}^{1/2}C_{1}^{1/2}\frac{e^{-k_{0}(1-\frac{1}{2p})B_{0}}-e^{-(1-\frac{1}{2p})[\frac{pt}{B_0}]B_0}}
{1-e^{-(1-\frac{1}{2p})B_{0}}}
+C_1\frac{e^{-k_0B_0}-e^{[\frac{pt}{B_0}]B_0}}{1-e^{-B_0}})\\
\geq&B_0e^{-B_0}(C_2\frac{e^{-(1-\frac{1}{p})B_0}(\frac{C_2}{C_1})^{p-1}-e^{-(1-\frac{1}{p})[\frac{pt}{B_0}]B_0}}
{1-e^{-(1-\frac{1}{p})B_{0}}}
\\&-2C_{2}^{1/2}C_{1}^{1/2}\frac{(\frac{C_2}{C_1})^{p(1-\frac{1}{2p})}-e^{-(1-\frac{1}{2p})[\frac{pt}{B_0}]B_0}}
{1-e^{-(1-\frac{1}{2p})B_{0}}}
+C_1\frac{e^{-B_0}(\frac{C_2}{C_1})^p-e^{[\frac{pt}{B_0}]B_0}} {1-e^{-B_0}}),
\end{split}
\end{equation}
Take limitation
\begin{equation}
\label{equ:effect20140126.6}
\begin{split}
\lim_{B_{0}\to0}&B_0e^{-B_0}(C_2\frac{e^{-(1-\frac{1}{p})B_0}(\frac{C_2}{C_1})^{p-1}-e^{-(1-\frac{1}{p})[\frac{pt}{B_0}]B_0}}
{1-e^{-(1-\frac{1}{p})B_{0}}}
\\&-2C_{2}^{1/2}C_{1}^{1/2}\frac{(\frac{C_2}{C_1})^{p(1-\frac{1}{2p})}-e^{-(1-\frac{1}{2p})[\frac{pt}{B_0}]B_0}}
{1-e^{-(1-\frac{1}{2p})B_{0}}}
+C_1\frac{e^{-B_0}(\frac{C_2}{C_1})^p-e^{[\frac{pt}{B_0}]B_0}} {1-e^{-B_0}})
\\=&C_2\frac{(\frac{C_2}{C_1})^{p-1}-e^{-(1-\frac{1}{p})pt}}{1-\frac{1}{p}}
\\&-2C_{2}^{1/2}C_{1}^{1/2}\frac{(\frac{C_{2}}{C_{1}})^{p(1-\frac{1}{2p})}-e^{-(1-\frac{1}{2p})pt}}{1-\frac{1}{2p}}
+C_1((\frac{C_{2}}{C_{1}})^{p}-e^{-pt})
\\=&C_2\frac{(\frac{C_2}{C_1})^{p-1}-e^{-(p-1)t}}{1-\frac{1}{p}}
\\&-2C_{2}^{1/2}C_{1}^{1/2}\frac{(\frac{C_{2}}{C_{1}})^{p-\frac{1}{2}}-e^{-(p-\frac{1}{2})t}}{1-\frac{1}{2p}}
+C_1((\frac{C_{2}}{C_{1}})^{p}-e^{-pt}).
\end{split}
\end{equation}
By inequality \ref{equ:effect20140126.4}, \ref{equ:effect20140126.5} and \ref{equ:effect20140126.6},
it follows that
\begin{equation}
\label{equ:effect20140126.8}
\begin{split}
(1-\frac{1}{p})^{-1}(\frac{C_{2}}{C_{1}})((\frac{C_{2}}{C_{1}})^{p-1}-&e^{-(p-1)t})-2(1-\frac{1}{2p})^{-1}(\frac{C_{2}}{C_{1}})^{\frac{1}{2}}
((\frac{C_{2}}{C_{1}})^{p-\frac{1}{2}}-e^{-(p-\frac{1}{2})t})
\\&+((\frac{C_{2}}{C_{1}})^{p}-e^{-pt})\leq1.
\end{split}
\end{equation}
When $p=1$, we can take $p=1$ to \eqref{equ:effect20140126.4}, by the same way, with some changes, and we can get \eqref{equ:effect20140126.9}.
We write down the details in the following.

Taking $k_{0}$, such that
$$e^{k_{0}B_{0}}\geq \frac{C_{1}}{C_{2}}\geq e^{(k_{0}-1)B_{0}}.$$
Let $p=1$ in \eqref{equ:effect20140126.3}, and take sum, then we have
\begin{equation}
\label{eq5}
\begin{split}
&\sum_{k=k_{0}}^{[t/B_0]-1}B_{0}e^{-(k+1)B_{0}}e^{kB_{0}}(C_{2}^{1/2}-(C_{1}e^{-kB_{0}})^{1/2})^{2}
\\&\leq \sum_{k=k_{0}}^{[t/B_0]-1}\int_{D}(\mathbbm{1}_{\{-(k+1)B_{0}<t<-kB_{0}\}}\circ\varphi)|F|^{2}e^{-\varphi}d\lambda_{n}
\\&\leq\int_{D_t}|F|^{2}e^{-\varphi}d\lambda_{n}\leq C_{1}.
\end{split}
\end{equation}

It follows that
\begin{equation}
\label{eq6}
\begin{split}
&\sum_{k=k_{0}}^{[t/B_0]-1}B_{0}e^{-(k+1)B_{0}}e^{kB_{0}}(C_{2}^{1/2}-(C_{1}e^{-kB_{0}})^{1/2})^{2}
\\=&\sum_{k=k_{0}}^{[t/B_0]-1}B_0e^{-B_0}(C_{2}-
2e^{-\frac{1}{2}kB_{0}}C_{2}^{1/2}C_{1}^{1/2}+e^{-kB_{0}}C_{1})
\\=&B_0e^{-B_0}(C_2([\frac{t}{B_0}]-k_0)
\\&-2C_{2}^{1/2}C_{1}^{1/2}\frac{e^{-\frac{1}{2}k_{0}B_{0}}-e^{-\frac{1}{2}[\frac{t}{B_0}]B_0}}
{1-e^{-\frac{1}{2}B_{0}}}
+C_1\frac{e^{-k_0B_0}-e^{-[\frac{t}{B_0}]B_0}}{1-e^{-B_0}})\\
\geq&B_0e^{-B_0}(C_2([\frac{t}{B_0}]+\frac{1}{B_0}\log \frac{C_2}{C_1}-1)
\\&-2C_{2}^{1/2}C_{1}^{1/2}\frac{(\frac{C_2}{C_1})^{\frac{1}{2}}-e^{-\frac{1}{2}[\frac{t}{B_0}]B_0}}
{1-e^{-\frac{1}{2}B_{0}}}
+C_1\frac{e^{-B_0}\frac{C_2}{C_1}-e^{-[\frac{t}{B_0}]B_0}} {1-e^{-B_0}}).
\end{split}
\end{equation}

Take limitation
\begin{equation}
\label{eq7}
\begin{split}
\lim_{B_{0}\to0}&B_0e^{-B_0}(C_2([\frac{t}{B_0}]+\frac{1}{B_0}\log \frac{C_2}{C_1}-1)
\\&-2C_{2}^{1/2}C_{1}^{1/2}\frac{(\frac{C_2}{C_1})^{\frac{1}{2}}-e^{-\frac{1}{2}[\frac{t}{B_0}]B_0}}
{1-e^{-\frac{1}{2}B_{0}}}
+C_1\frac{e^{-B_0}\frac{C_2}{C_1}-e^{-[\frac{t}{B_0}]B_0}} {1-e^{-B_0}})
\\=&C_2(C_2t+\log \frac{C_2}{C_1})
\\&-4C_{2}^{1/2}C_{1}^{1/2}((\frac{C_{2}}{C_{1}})^{\frac{1}{2}}-e^{-\frac{1}{2}pt})
+C_1(\frac{C_{2}}{C_{1}}-e^{-t}).
\end{split}
\end{equation}
By inequalities \eqref{eq5} \eqref{eq6} and \eqref{eq7}, we have
$$(\frac{C_{2}}{C_{1}})(t+\log \frac{C_{2}}{C_{1}})-4(\frac{C_{2}}{C_{1}})^{\frac{1}{2}}
((\frac{C_{2}}{C_{1}})^{\frac{1}{2}}-e^{-\frac{t}{2}})
+((\frac{C_{2}}{C_{1}})-e^{-t})\leq1.$$

When $p=1/2$, take $k_{0}$, such that
$$e^{2k_0B_{0}}\geq \frac{C_{1}}{C_{2}}\geq e^{2(k_{0}-1)B_{0}}.$$
Let $p=1/2$ in \eqref{equ:effect20140126.3}, and take sum, then we have
\begin{equation}
\label{eq8}
\begin{split}
&\sum_{k=k_{0}}^{[\frac{t}{2B_0}]-1}B_{0}e^{-(k+1)B_{0}}e^{2kB_{0}}(C_{2}^{1/2}-(C_{1}e^{-2kB_{0}})^{1/2})^{2}
\\&\leq \sum_{k=k_{0}}^{[\frac{t}{2B_0}]-1}\int_{D}(\mathbbm{1}_{\{-2(k+1)B_{0}<t<-2kB_{0}\}}\circ\varphi)|F|^{2}e^{-\varphi}d\lambda_{n}
\\&\leq\int_{D_t}|F|^{2}e^{-\varphi}d\lambda_{n}\leq C_{1}.
\end{split}
\end{equation}
As

\begin{equation}\label{eq9}
\begin{split}
&\sum_{k=k_{0}}^{[\frac{t}{2B_0}]-1}B_{0}e^{-(k+1)B_{0}}e^{2kB_{0}}(C_{2}^{1/2}-(C_{1}e^{-2kB_{0}})^{1/2})^{2}
\\=&\sum_{k=k_{0}}^{[\frac{t}{2B_0}]-1}B_0e^{-B_0}(e^{kB_0}C_2-C_2^{\frac{1}{2}}C_1^{\frac{1}{2}}+C_1e^{-kB_0})
\\=&B_0e^{-B_0}(C_2\frac{e^{B_0}(\frac{C_2}{C_1})^{-\frac{1}{2}}-e^{[\frac{t}{2B_0}]B_0}}{1-e^{B_0}}
\\&-2C_2^{\frac{1}{2}}C_1^{\frac{1}{2}}([\frac{t}{2B_0}]-k_0)
+C_1\frac{e^{-k_0B_0-e^{-[\frac{t}{2B_0}]B_0}}}{1-e^{-B_0}})
\\\geq &B_0e^{-B_0}(C_2\frac{e^{B_0}(\frac{C_2}{C_1})^{\frac{1}{2}}-e^{[\frac{t}{2B_0}]B_0}}{1-e^{B_0}}
\\&-2C_2^{\frac{1}{2}}C_1^{\frac{1}{2}}([\frac{t}{2B_0}]+\frac{1}{2}\log \frac{C_2}{C_1})
+C_1\frac{e^{-B_0}(\frac{C_2}{C_1})^{\frac{1}{2}}-e^{-[\frac{t}{2B_0}]B_0}}{1-e^{-B_0}}).
\end{split}
\end{equation}
Take limitation
\begin{equation}\label{eq10}
\begin{split}
  \lim_{B_0\rightarrow 0} &B_0e^{-B_0}(C_2\frac{e^{B_0}(\frac{C_2}{C_1})^{\frac{1}{2}}-e^{[\frac{t}{2B_0}]B_0}}{1-e^{B_0}}
\\&-2C_2^{\frac{1}{2}}C_1^{\frac{1}{2}}([\frac{t}{2B_0}]+\frac{1}{2}\log \frac{C_2}{C_1})
+C_1\frac{e^{-B_0}(\frac{C_2}{C_1})^{\frac{1}{2}}-e^{-[\frac{t}{2B_0}]B_0}}{1-e^{-B_0}})
\\=&C_2(e^{\frac{t}{2}}-(\frac{C_2}{C_1})^{-\frac{1}{2}})-4C_2^{\frac{1}{2}}C_1^{\frac{1}{2}}(t+\log \frac{C_2}{C_1})
+C_1((\frac{C_2}{C_1})^{\frac{1}{2}}-e^{-\frac{t}{2}}).
\end{split}
\end{equation}
Combine inequalities \eqref{eq8} \eqref{eq9} and \eqref{eq10}, then we have
$$\frac{C_2}{C_1}(e^{\frac{t}{2}}-(\frac{C_2}{C_1})^{-\frac{1}{2}})-(\frac{C_2}{C_1})^{\frac{1}{2}}(t+\log \frac{C_2}{C_1})
+(\frac{C_2}{C_1})^{\frac{1}{2}}-e^{-\frac{t}{2}}\leq 1.$$
\end{proof}

\begin{Remark}
For $p=1$, let $p_j>1$, $j=1,2,\cdots$ and $lim_{j\rightarrow \infty}p_j=1$,
then from $C_{F,p_j\varphi,t}\geq C_{F,\varphi,t}\geq C_2$.
Therefore, inequality \eqref{equ:effect20140126.7} with $p=p_j$.
Taking $j\rightarrow \infty$, we can get
$$(\frac{C_{2}}{C_{1}})(t+\log \frac{C_{2}}{C_{1}})-4(\frac{C_{2}}{C_{1}})^{\frac{1}{2}}
((\frac{C_{2}}{C_{1}})^{\frac{1}{2}}-e^{-\frac{t}{2}})
+((\frac{C_{2}}{C_{1}})-e^{-t})\leq1.$$

For $p=1/2$, let $p_j>1/2$, $j=1,2,\cdots$ and $\lim_{j\rightarrow \infty}p_j=1/2$,
then from $C_{F,p_j\varphi,t}\geq C_{F,\frac{1}{2}\varphi,t}\geq C_2$.
Therefore, inequality \eqref{equ:effect20140126.7} with $p=p_j$.
Taking $j\rightarrow \infty$, we can get
$$\frac{C_2}{C_1}(e^{\frac{t}{2}}-(\frac{C_2}{C_1})^{-\frac{1}{2}})-(\frac{C_2}{C_1})^{\frac{1}{2}}(t+\log t)
+(\frac{C_2}{C_1})^{\frac{1}{2}}-e^{-\frac{t}{2}}\leq 1.$$
\end{Remark}

\section{Proof of Theorem \ref{thm 1}}

It suffices to prove the following proposition

\begin{Proposition}\label{prop 1}
Let $C_{1}$, $C_{2}$ be two positive constants and $t>\log\frac{C_1}{C_2}$. If the set of $(F,\varphi)$ satisfying\\
$(1)$ $\int_{D_t}|F|^{2}e^{-\varphi}=C_{1}$;\qquad
$(2)$ $C_{F,\varphi,t}(z_{0})=C_{2}$;\\
Then $\frac{\int_{D_t}|F|^{2}e^{-\varphi}d\lambda_n}{C_{F,\varphi,t}(z_{0})}\geq h^{-1}(t-3)$, where $h(x)=x+\log x$.
\end{Proposition}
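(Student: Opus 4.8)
The plan is to read off Proposition~\ref{prop 1} from the case $p=1$ of Proposition~\ref{p:effect}. Put
\[
r:=\frac{\int_{D_t}|F|^{2}e^{-\varphi}d\lambda_n}{C_{F,\varphi,t}(z_{0})}=\frac{C_1}{C_2},
\]
so that the hypothesis of Proposition~\ref{prop 1} reads $t>\log r$; observe also that $r\ge 1$, since taking $F_1=F$ in the definition of $C_{F,\varphi,t}(z_0)$ together with $\varphi\le 0$ gives $C_2\le\int_{D_t}|F|^2d\lambda_n\le C_1$. The triple $(F,\varphi,1)$ meets conditions $(1)$ and $(2)$ of Proposition~\ref{p:effect} (with equalities) and the requirement $t\ge\log\frac{C_1}{C_2}$, so inequality~\eqref{equ:effect20140126.9} is available. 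Since $h(x)=x+\log x$ has $h'(x)=1+1/x>0$ on $(0,+\infty)$, $h$ is a strictly increasing bijection of $(0,+\infty)$ onto $\mathbb{R}$, so $h^{-1}$ is everywhere defined and the desired conclusion $r\ge h^{-1}(t-3)$ is equivalent to $h(r)\ge t-3$.

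Next I would turn \eqref{equ:effect20140126.9} into an inequality for $r$. Substituting $\frac{C_2}{C_1}=r^{-1}$ and $\bigl(\frac{C_2}{C_1}\bigr)^{1/2}=r^{-1/2}$, expanding the two products appearing in \eqref{equ:effect20140126.9}, and then multiplying through by $r>0$, the inequality \eqref{equ:effect20140126.9} becomes
\[
r+\log r\;\ge\;(t-3)+4\sqrt{r}\,e^{-t/2}-r\,e^{-t}.
\]
The key point is that the two trailing terms contribute nonnegatively: the hypothesis $t>\log r$ gives $\sqrt{r}\,e^{-t/2}=\sqrt{r/e^{t}}<1$, whence
\[
4\sqrt{r}\,e^{-t/2}-r\,e^{-t}=\sqrt{r}\,e^{-t/2}\bigl(4-\sqrt{r}\,e^{-t/2}\bigr)>0 .
\]
Therefore $h(r)=r+\log r\ge t-3$, and applying the increasing map $h^{-1}$ gives $r\ge h^{-1}(t-3)$, which is Proposition~\ref{prop 1}; together with the reduction in Section~\ref{sec:reformulation} this yields Theorem~\ref{thm 1} and the Corollary.

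I do not expect a real obstacle beyond careful bookkeeping: everything rests on the elementary rearrangement of \eqref{equ:effect20140126.9} and on the sign of the error term $4\sqrt{r}\,e^{-t/2}-r\,e^{-t}$, which is precisely where the standing hypothesis $t>\log\frac{C_1}{C_2}$ is used (in fact only $\sqrt{r}\,e^{-t/2}\le 4$, i.e.\ $r\le 16e^{t}$, is needed, and one could remove the hypothesis altogether by treating $r>16e^{t}$ separately via $h(16e^{t})\ge t-3$). As a consistency check, the example in the Remark after Theorem~\ref{thm 1} has $r=t/(1-e^{-t})\ge t$, so $r+\log r\ge t+\log t\ge t-3$; and since $h^{-1}(s)=s-\log s+o(1)$ as $s\to+\infty$, we get $h^{-1}(t-3)\sim t-\log t\sim t\sim r$, in agreement with the asserted optimality.
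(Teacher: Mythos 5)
Your proof is correct and takes essentially the same route as the paper: both invoke the case $p=1$ of Proposition~\ref{p:effect}, rearrange \eqref{equ:effect20140126.9} to isolate $r+\log r$, and discard the error term $4\sqrt{r}\,e^{-t/2}-re^{-t}$ by noting it is nonnegative under the standing hypothesis $t>\log\frac{C_1}{C_2}$. The only differences are cosmetic (working with $r=C_1/C_2$ throughout instead of $C_2/C_1$, and spelling out why $h^{-1}$ is globally defined and why the error term has the right sign — both points the paper leaves implicit).
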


Note that

(1) when $t>\log \frac{C_1}{C_2}$ and $C_{F,\varphi,t}(z_{0})>0$, Proposition \ref{prop 1} implies $\frac{\int_{D_t}|F|^{2}e^{-\varphi}d\lambda_n}{C_{F,\varphi,t}(z_{0})}\geq h^{-1}(t-3)$;

(2) when $t\leq \log \frac{C_1}{C_2}$ and $C_{F,\varphi,t}(z_{0})>0$, it follows that $\frac{\int_{D_t}|F|^{2}e^{-\varphi}d\lambda_n}{C_{F,\varphi,t}(z_{0})}=\frac{C_{1}}{C_{2}}\geq e^{t}\geq h^{-1}(t-3)$,

(3) when $C_{F,\varphi,t}(z_{0})=0$, set $\frac{\int_{D_t}|F|^{2}e^{-\varphi}d\lambda_n}{C_{F,\varphi,t}(z_{0})}=+\infty$,
\\
then Proposition \ref{prop 1} implies Theorem \ref{thm 1}.

\begin{proof}(Proof of Proposition \ref{prop 1})
By Proposition \ref{p:effect} (case $p=1$), we have
\begin{equation}\label{eq1}
(\frac{C_{2}}{C_{1}})(t+\log \frac{C_{2}}{C_{1}})-4(\frac{C_{2}}{C_{1}})^{\frac{1}{2}}
((\frac{C_{2}}{C_{1}})^{\frac{1}{2}}-e^{-\frac{t}{2}})
+((\frac{C_{2}}{C_{1}})-e^{-t})\leq1,
\end{equation}

Since
\begin{equation}\label{eq2}
\begin{split}
&(\frac{C_{2}}{C_{1}})(t+\log \frac{C_{2}}{C_{1}})-4(\frac{C_{2}}{C_{1}})^{\frac{1}{2}}
((\frac{C_{2}}{C_{1}})^{\frac{1}{2}}-e^{-\frac{t}{2}})+((\frac{C_{2}}{C_{1}})-e^{-t})\\
\geq &\frac{C_2}{C_1}(t+\log \frac{C_2}{C_1})-3\frac{C_2}{C_1},
\end{split}
\end{equation}
combining \eqref{eq1} and \eqref{eq2}, we get
\begin{equation}\label{eq3}
\frac{C_2}{C_1}(t+\log \frac{C_2}{C_1})-3\frac{C_2}{C_1}\leq 1,
\end{equation}
hence
$$\frac{C_1}{C_2}+\log \frac{C_1}{C_2}\geq t-3.$$
Set $h(x)=x+\log x$ for $x>0$, it is easy that $h(x)$ is increasing.
By \eqref{eq3}, we get $\frac{C_1}{C_2}\geq h^{-1}(t-3)$.
We may take $C_1=\int_{D_t}|F|^2e^{-\varphi}$, then the proposition follows.
\end{proof}

\bibliographystyle{references}
\bibliography{xbib}

\end{document}